\newtheorem{Theorem}{Theorem}[section]
\theoremstyle{definition}
\newtheorem{remark}[Theorem]{Remark}
\newcommand{\R}{{\mathbb R}}
\DeclareMathOperator{\argmin}{argmin}
\newcommand{\interior}{{\rm int}\kern 0.06em}
\newcommand{\inte}{{\rm int}\kern 0.06em}
\newcommand{\cl}{{\rm cl}\kern 0.06em}
\newcommand{\zer}{{\rm zer}\kern 0.06em}
\newcommand{\gph}{{\rm gph}\kern 0.06em}
\newcommand{\dom}{{\rm dom}\kern 0.06em}
\renewcommand{\epsilon}{\varepsilon}
\newcommand{\prox}{{\rm prox}\kern 0.08em}
\newcommand{\cC}{{\mathcal C}}
\newcommand{\cH}{{\mathcal H}}
\newcommand{\cO}{{\mathcal O}}
\newcommand{\ie}{{\it i.e.}\,\,}
\newcommand{\rinf}{\mathbb R\cup\{+\infty\}}
\newcommand{\Rb}{\mathbb R\cup\{+\infty\}}
\begin{document}

\title[Fast decay of solutions to   third order evolution equations]{Fast convex optimization via a third-order in time evolution equation: TOGES-V an improved version of TOGES}

\author{Hedy Attouch}
\address{IMAG, UMR 5149 CNRS, Universit\'e Montpellier,
34095 Montpellier cedex 5, France}
\email{hedy.attouch@umontpellier.fr}

\author{Zaki Chbani}
\address{Cadi Ayyad university, Faculty of Sciences Semlalia, Mathematics, 40000 Marrakech, Morroco. }
\email{chbaniz@uca.ac.ma}

\author{Hassan Riahi}
\address{Cadi Ayyad university, Faculty of Sciences Semlalia, Mathematics, 40000 Marrakech, Morroco. }
\email{h-riahi@uca.ac.ma}

\date{July 4, 2020}

\begin{abstract} 
In a Hilbert space setting $\cH$, for convex optimization, we analyze the fast convergence properties as $t \rightarrow +\infty$ of the trajectories $t \mapsto u(t) \in \cH$ generated by  a third-order in time evolution system.
The  function $f: \cH \to \R$ to minimize is supposed to be convex, continuously differentiable, with  $\argmin_{\cH} f \neq \emptyset$.
It enters into the dynamic through its gradient.  
Based on this new dynamical system, we  improve the results obtained by 
[Attouch, Chbani,  Riahi: Fast convex optimization
via a third-order in time evolution equation, Optimization 2020]. As a main result,  when the  damping parameter  $\alpha$ satisfies
 $\alpha > 3$, we show that   $f(u(t)) - \inf_{\cH} f =   o\left(  1/t^3 \right)$  as $t \rightarrow +\infty$,
as well as  the convergence of the trajectories.
We complement these results by introducing into the dynamic an Hessian driven damping term, which reduces the oscillations. 
In the case of a strongly convex function $ f $, we show an autonomous evolution system of the third order in time with an exponential rate of convergence.
All these results have natural extensions to the case of a convex lower semicontinuous function $f: \cH \to \Rb$. Just replace $f$ with its Moreau envelope.
\end{abstract}

\vspace{0.3cm}

\keywords{Accelerated gradient system; convex optimization; time rescaling; fast convergence; Lyapunov analysis}
\vspace{0.3cm}

\subjclass{49M37, 65K05, 90C25.}
\bigskip
\maketitle

Throughout the paper  $\cH$ is a real Hilbert space, endowed with the scalar product $\langle\cdot,\cdot\rangle$ and the associated norm $\Vert\cdot\Vert$.
Unless specified, $f: \cH \to \R$ is a $\cC^1$ convex function
with $\argmin_{\cH} f \neq \emptyset$. We take $t_0 >0$ as the origin of time (this is justified by the singularity at the origin of the damping coefficient  $\gamma (t) = \frac{\alpha}{t}$  which is used  in the paper).

\section{Introduction of the third-order dynamic (TOGES-V)}
In view of developing fast optimization methods, our study concerns the study of the convergence properties, as $t \rightarrow +\infty$, of the trajectories generated by the third-order in time evolution system
$$
{\rm (TOGES-V)} \quad \dddot u(t) +\frac{\alpha +7}t\ddot u(t)+  \frac{5(\alpha +1)}{t^2} \dot u(t)+ \nabla f\left(u(t)+\frac14t\dot u(t)\right)=0.
$$
As a main result, we will prove the following fast convergence result: 
assuming that the damping parameter satisfies  $\alpha \geq 3$, then for any solution trajectory of the dynamic above
  $$f(u(t)) - \inf_{\cH} f =  { \mathcal O\left( \frac{ 1}{t^3} \right)} \, \mbox{ as } \, t \rightarrow +\infty.$$
  The above system is closely linked to the system (TOGES) introduced by the authors in \cite{ACR-opt} and that we recall below.
\begin{equation*}
{\rm (TOGES)} \quad \dddot{u}(t) + \frac{3\alpha +5}{2t} \ddot{u}(t) +     \frac{3\alpha -1}{t^2} \dot{u}(t)  + \nabla f(u(t) + t \dot{u}(t)) =0.
\end{equation*}  
In \cite[Theorem 2.1]{ACR-opt} the following result has been proved:\\
 Suppose $\alpha \geq 3$. Then, for any trajectory of (TOGES)
 there exists a constant $C>0$ such that, for all $t\geq t_0$ 
\begin{eqnarray*}
&& f(u(t) +t\dot{u}(t) ) - \inf_{\cH} f \leq   \frac{C}{t^3}; \\
&& f(u(t)) - \inf_{\cH}f  \leq \frac{1}{t}\left( t_0(f(x(t_0)) - \inf_{\cH}f )+ \frac{C}{2 t_0^2} \right).
\end{eqnarray*}  
Note that the convergence rate of the values for $u(t)$ is only of order $1/t$, which is not completely satisfactory from the point of view of fast optimization (the classical first-order in time steepest descent does as well). 
By contrast,  we will obtain that for any trajectory generated by (TOGES-V), 
 $$
f(u(t)) - \inf_{\cH}f  =\cO \left(\frac{1}{t^3}\right) \, \mbox{ as } \, t \rightarrow +\infty.
 $$ 
So, (TOGES-V) is a significant amelioration of (TOGES).   
 As for (TOGES), the introduction of (TOGES-V) relies on time rescaling and change of variable technics.
As a starting point for our study, we consider the second-order dynamic
\begin{equation*}
{\rm (AVD)}_{\alpha} \quad \ddot{x}(t)   + \frac{\alpha}{t} \dot{x}(t) +    \nabla f (x(t))  = 0
\end{equation*}
introduced  by  Su-Boyd-Cand\`es \cite{SBC}, and further studied by  Attouch-Chbani-Peypouquet-Redont \cite{ACPR} and May \cite{May}.
The importance of this dynamic comes from the fact that the accelerated gradient method of Nesterov can be obtained as a temporal discretization by taking  $\alpha=3$.
 As a specific feature, the viscous damping coefficient $\frac{\alpha}{t}$ vanishes (tends to zero) as time $t$ goes to infinity, hence the terminology Asymptotic Vanishing Damping with coefficient $\alpha$, ${\rm (AVD)}_{\alpha}$ for short. Let us briefly recall the convergence properties of this system:
  \begin{itemize}
 \item  
 For $\alpha \geq 3$, each trajectory $x(\cdot)$ of ${\rm \mbox{(AVD)}}_{\alpha}$  satisfies the asymptotic convergence rate of the values $f(x(t)) - \displaystyle{\inf_{\cH}f} =\cO \left(1 /t^2\right)$ as $t\to +\infty$, see \cite{AC1}, \cite{ACPR}, \cite{May}, \cite{SBC}.

 \item For $\alpha >3$, it has been shown in   
\cite{ACPR} that each trajectory converges weakly to a minimizer.  The corresponding algorithmic result has been obtained by Chambolle-Dossal \cite{CD}.
In addition, it is shown in \cite{AP} and \cite{May} that the asymptotic convergence rate of the values is  $o(1/t^2)$.

\end{itemize}
These rates are optimal, that is, they can be reached, or approached arbitrarily close.
For further results concerning the system ${\rm \mbox{(AVD)}}_{\alpha}$ one can consult \cite{AAD,AC1,AC2,AC2R-EECT,ACPR,ACR-subcrit,AP,AD,AD17,May,SBC}.

\noindent Let's make  the time rescaling of ${\rm \mbox{(AVD)}}_{\alpha}$ given by
$$t= \tau (s)= s^{\frac{3}{2}}, \quad v(s)= x(\tau (s)).$$
After elementary computation, we obtain the rescaled dynamic (see \cite{ACR-opt}, \cite[Theorem 8.1]{ACR-rescale} and \cite[Corollary 3.4]{ACR-contin} for further details)
\begin{equation}\label{change var77_b}
 \ddot{v} (s)      + \frac{3\alpha -1}{2s}\dot{v} (s) +  \frac{9}{4}s\nabla f(v(s)) =0.
\end{equation}
Since $\alpha \geq 3$ is equivalent to $\frac{3\alpha -1}{2}\geq 4$,
we obtain   that, for $\alpha \geq 3$, 
for any solution trajectory $v(\cdot)$ of 
\begin{equation}\label{change var77_b2}
 \ddot{v} (t)      + \frac{\alpha +1}{t}\dot{v} (t) + t\nabla f(v(t)) =0,
\end{equation}
we have 
 \begin{equation}\label{change var8}
f(v(t))-\inf_{\mathcal H} f
 =\mathcal O \left(\frac{1}{t^{3}}\right) \, \mbox{ as  } t \to + \infty.
\end{equation}
Let's go further, and make a change of the unknown function $v$.  For all $t\geq t_0>0$,
set 
$$v(t)=\frac1{4t^3}\frac{d}{dt}(t^4u(t))=\frac14t\dot u(t)+u(t).$$
Note that $u$ is uniquely determined by $v$ and the Cauchy data.
Then 
$$
\dot v (t)= \frac14t\ddot u (t) +\frac54\dot u (t) \; \hbox{ and }\; \ddot v (t)= \frac14t\dddot u (t)+\frac32\ddot u (t).
$$
  As a consequence,  \eqref{change var77_b} becomes the following third-order evolution system (so doing, we have replaced $4f$ by $f$, which does not affect the convergence rates): 
\begin{equation}
\tag{TOGES-V} \dddot u(t) +\frac{\alpha +7}t\ddot u(t)+  \frac{5(\alpha +1)}{t^2} \dot u(t)+   \nabla f\left(u(t)+\frac14t\dot u(t)\right)=0.
\end{equation}
While keeping a similar structure, this system differs notably from the system (TOGES) analyzed by the authors in \cite {ACR-opt}, hence the suffix V, as Variant. But, as indicated below, these modifications of the dynamics coefficients have important consequences on the convergence rates.
We take for granted the existence of   global solutions in the classical sense of (TOGES-V), which is a direct consequence of the non-autonomous Cauchy-Lipschitz theorem, see for example \cite[Proposition 6.2.1]{haraux91}.

\section{Convergence results}
Let us state our main convergence result.

\begin{Theorem}\label{thm_principal_b}
Let $u: [t_0, +\infty[ \to \cH$ be a solution trajectory of the evolution system {\rm (TOGES-V)}.

\medskip

a) Suppose that $\alpha \geq 3$. Then, as $t \to + \infty$
\begin{itemize}
\item[$(i)$] \, $f(u(t)  + \frac14t\dot u(t)) - \inf_{\cH} f = \displaystyle{\mathcal O \left( \frac{ 1}{t^3} \right)}; $
\item[$(ii)$] \,  $f(u(t)) - \inf_{\cH} f =   \displaystyle{ \mathcal O\left( \frac{ 1}{t^3} \right)}$.
\end{itemize}

b) Suppose that $\alpha > 3$. Then, as $t \to + \infty$

\begin{itemize}
\item[$(iii)$] \, $f(u(t)  + \frac14t\dot u(t)) - \inf_{\cH} f = \displaystyle{o \left( \frac{ 1}{t^3} \right)}; $
\item[$(iv)$] \,  $f(u(t)) - \inf_{\cH} f =   \displaystyle{ o\left( \frac{ 1}{t^3} \right)}$;
\item[$(v)$] \, the trajectory  converges weakly as $t \to + \infty$, let $u(t) \rightharpoonup u_{\infty}$, and $u_{\infty} \in \argmin_{\cH} f$.
\end{itemize}
\end{Theorem}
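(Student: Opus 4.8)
The plan is to transfer everything back to the well‑understood second‑order dynamic \eqref{change var77_b} via the change of variables used to derive (TOGES-V), and then read off the stated rates. Concretely, starting from a solution $u(\cdot)$ of (TOGES-V), I would set
\[
v(t)=u(t)+\tfrac14 t\dot u(t)=\frac{1}{4t^{3}}\frac{d}{dt}\bigl(t^{4}u(t)\bigr),
\]
and check by the differentiation identities already displayed in the Introduction that $v$ solves \eqref{change var77_b} (equivalently \eqref{change var77_b2}, after the harmless rescaling replacing $4f$ by $f$). This is the reverse of the derivation already carried out, so it is essentially a verification. The point $(i)$ is then immediate from \eqref{change var8}: since $v$ solves the rescaled (AVD)$_\alpha$ with $\alpha\ge 3$, one has $f(v(t))-\inf_{\cH}f=\mathcal O(1/t^{3})$, and $v(t)=u(t)+\tfrac14 t\dot u(t)$.

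For $(ii)$, the idea is to recover $u$ from $v$ by integrating the first‑order linear ODE $u(t)+\tfrac14 t\dot u(t)=v(t)$, i.e. $\frac{d}{dt}(t^{4}u(t))=4t^{3}v(t)$, which gives
\[
u(t)=\frac{1}{t^{4}}\Bigl(t_0^{4}u(t_0)+4\int_{t_0}^{t}s^{3}v(s)\,ds\Bigr).
\]
Then $f(u(t))-\inf_{\cH}f$ is estimated by Jensen's inequality applied to the convex function $f$ with the probability measure $\frac{4s^{3}\,ds}{t^{4}-t_0^{4}}$ on $[t_0,t]$ (absorbing the vanishing boundary term $t_0^4 u(t_0)/t^4$, which only affects lower‑order terms — here one may need $u(t_0)\in\argmin f$ or simply note the term contributes $O(1/t^4)$ to the argument of $f$ and use local Lipschitz/convexity estimates, or more cleanly use that $f$ is bounded below and argue with $f(u(t))-\inf f \le \frac{1}{t^4-t_0^4}\int_{t_0}^t 4s^3 (f(v(s))-\inf f)\,ds + (\text{boundary})$). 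Since $f(v(s))-\inf_{\cH}f\le C/s^{3}$, the integral $\int_{t_0}^{t}4s^{3}\cdot\frac{C}{s^{3}}\,ds= 4C(t-t_0)$, divided by $t^{4}-t_0^{4}\sim t^{4}$, yields $\mathcal O(1/t^{3})$. The boundary term $t_0^4(f(u(t_0))-\inf f)/t^4$ is $\mathcal O(1/t^4)=o(1/t^3)$, so it is harmless. This handles $(ii)$.

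For part b), under $\alpha>3$ the stronger facts about \eqref{change var77_b}/(AVD)$_\alpha$ apply: the rescaled dynamic inherits the $o(1/t^{2})$ improvement and weak convergence of trajectories of (AVD)$_\alpha$ for $\alpha>3$ (via $t=s^{3/2}$), giving $f(v(t))-\inf_{\cH}f=o(1/t^{3})$, which is $(iii)$; moreover $v(t)\rightharpoonup \bar v$ for some $\bar v\in\argmin f$. Then $(iv)$ follows by exactly the same Jensen/averaging argument as for $(ii)$, now splitting $\int_{t_0}^t 4s^3(f(v(s))-\inf f)\,ds = \int_{t_0}^{T} + \int_{T}^{t}$: on $[T,t]$ use $f(v(s))-\inf f\le \epsilon/s^{3}$ for $s\ge T$ to get an $\epsilon$-controlled contribution, and the fixed piece on $[t_0,T]$ divided by $t^4$ is $O(1/t^4)$. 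For $(v)$, the weak convergence of $u$: from $u(t)=\frac{1}{t^4}\bigl(t_0^4 u(t_0)+4\int_{t_0}^t s^3 v(s)\,ds\bigr)$ and the Cesàro‑type averaging with weight $4s^3/t^4$, the weak convergence $v(t)\rightharpoonup \bar v$ transfers to $u(t)\rightharpoonup \bar v=:u_\infty\in\argmin f$ (weighted averages of a weakly convergent curve converge weakly to the same limit, by testing against any $\xi\in\cH$ and using the scalar Cesàro lemma).

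The main obstacle I anticipate is not the algebra of the change of variables — that is forced and already half‑done in the Introduction — but rather (a) being careful that the cited results for (AVD)$_\alpha$, namely the $\mathcal O(1/t^2)$, $o(1/t^2)$ rates and weak convergence, are correctly pushed through the time rescaling $t=s^{3/2}$ to the form \eqref{change var77_b2} with the shifted damping $\alpha+1$ (the paper asserts this but one should double‑check the exponent bookkeeping so that $\alpha\ge3$ matches $\frac{3\alpha-1}{2}\ge 4$ and $\alpha>3$ matches the strict inequality needed for $o$-rates and convergence), and (b) the passage from rates on $f(v(t))$ to rates on $f(u(t))$: one must confirm that the averaging representation $t^4 u(t)=t_0^4 u(t_0)+4\int_{t_0}^t s^3 v(s)\,ds$ is valid for \emph{any} solution of (TOGES-V) (it is, since it is just the definition $v=\frac1{4t^3}\frac{d}{dt}(t^4 u)$ integrated), and that the boundary term genuinely sits at order $1/t^4$. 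Once these two points are nailed down, all five assertions fall out. I would present part a) fully with the Jensen estimate, then note that part b) is the verbatim argument with $\mathcal O$ replaced by $o$ plus the split‑integral trick, and close with the short weak‑convergence transfer.
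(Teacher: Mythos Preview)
Your plan is correct and matches the paper's proof almost exactly: reduce to $v(t)=u(t)+\tfrac14 t\dot u(t)$, invoke the known rates and weak convergence for the rescaled $(\mathrm{AVD})_\alpha$ system, and pass back to $u$ via the averaging formula $t^{4}u(t)=t_0^{4}u(t_0)+\int_{t_0}^{t}4s^{3}v(s)\,ds$ together with convexity/Jensen. Parts $(i)$, $(iii)$, $(v)$ are handled identically; for $(iv)$ the paper simply says ``by a similar argument,'' and your split $\int_{t_0}^{T}+\int_{T}^{t}$ is exactly the right way to upgrade $\mathcal O$ to $o$.

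One clarification on the point that worries you: the boundary term is \emph{not} an obstacle and needs neither $u(t_0)\in\argmin f$ nor any Lipschitz estimate. Your formula already displays $u(t)$ as a \emph{bona fide} convex combination
\[
u(t)=\frac{t_0^{4}}{t^{4}}\,u(t_0)+\Bigl(1-\frac{t_0^{4}}{t^{4}}\Bigr)\frac{1}{t^{4}-t_0^{4}}\int_{t_0}^{t}4s^{3}v(s)\,ds,
\]
so convexity of $F:=f-\inf f$ applied \emph{first}, followed by Jensen on the barycenter, gives directly
\[
F(u(t))\le \frac{t_0^{4}}{t^{4}}F(u(t_0))+\frac{1}{t^{4}}\int_{t_0}^{t}4s^{3}F(v(s))\,ds\le \frac{t_0^{4}F(u(t_0))}{t^{4}}+\frac{4C(t-t_0)}{t^{4}},
\]
which is $\mathcal O(1/t^{3})$. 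The paper reaches the same inequality by integrating from $t$ to $t+h$, letting $h\to0^{+}$ to obtain the differential inequality $\frac{d}{dt}\bigl(t^{4}F(u(t))\bigr)\le C$, and then integrating from $t_0$; this detour buys the extra descent-type statement (later exploited in Theorem~\ref{Lyapunov_h}) that $t\mapsto t^{4}F(u(t))-Ct$ is nonincreasing, but for the bare rate your direct argument is equivalent and arguably cleaner.
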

\begin{proof}
To obtain $(i)$, just replace $v(t) $ by $u(t)  + \frac14t\dot u(t)$  in \eqref{change var8}.\\
 To prove $(ii)$, let's start from the relation
$\frac{d}{dt}\left(t^4u(t)\right)=4t^3v(t)$. After integration from $t$ to $t+h$ of this relation, we get
\begin{eqnarray*}
u(t+h) &= & \left(\frac{t}{t+h}\right)^4u(t)+ \frac{1}{(t+h)^4}\int_t^{t+h}4\tau^3 v(\tau)d\tau\\
	  &= & \left(\frac{t}{t+h}\right)^4u(t)+ \left(1-\left(\frac{t}{t+h}\right)^4\right)\frac{1}{(t+h)^4-t^4} \int_t^{t+h}4\tau^3 v(\tau)d\tau\\
	  &= & \left(\frac{t}{t+h}\right)^4u(t)+ \left(1-\left(\frac{t}{t+h}\right)^4\right)\frac{1}{(t+h)^4-t^4} \int_{t^4}^{(t+h)^4} v(s^{1/4})ds,
\end{eqnarray*}
where the last equality comes from the change of time variable $s=\tau^4$.
According to the convexity of the   function $F=f - \inf_{\cH}f$, and using the Jensen inequality, we obtain
\begin{eqnarray*}
F(u(t+h)) & \leq &  \left(\frac{t}{t+h}\right)^4F(u(t))+ \left(1-\left(\frac{t}{t+h}\right)^4\right)F\left(\frac{1}{(t+h)^4-t^4} \int_{t^4}^{(t+h)^4} v(s^{1/4})ds\right)\\
	& \leq &  \left(\frac{t}{t+h}\right)^4F(u(t))+ \left(1-\left(\frac{t}{t+h}\right)^4\right)\frac{1}{(t+h)^4-t^4}  \int_{t^4}^{(t+h)^4} F\left( v(s^{1/4})\right) ds .
\end{eqnarray*}
Using again the change of time variable $s=\tau^4$, we get
$$
\int_{t^4}^{(t+h)^4} F\left( v(s^{1/4})\right) ds =  \int_t^{t+h}4\tau^3 F\left(v(\tau)\right)d\tau .
$$
It follows
\begin{eqnarray*}
F(u(t+h))  \leq  \left(\frac{t}{t+h}\right)^4F(u(t))+ \left(1-\left(\frac{t}{t+h}\right)^4\right)\frac{1}{(t+h)^4-t^4}  \int_{t}^{t+h} F\left( v(\tau)\right) 4\tau^3d\tau .
\end{eqnarray*}
Using the assertion $(i)$, we obtain the existence of a constant $C>0$ such that
\begin{eqnarray}\label{basic_c}
F(u(t+h)) &\leq&   \left(\frac{t}{t+h}\right)^4F(u(t))+ \left(1-\left(\frac{t}{t+h}\right)^4\right)\frac{C}{(t+h)^4-t^4}\int_t^{t+h}  d\tau  \nonumber\\
&=& \left(\frac{t}{t+h}\right)^4F(u(t))+\frac{Ch}{(t+h)^4}.
\end{eqnarray}
Therefore,
\begin{eqnarray*}
\frac1h\left( (t+h)^4F(u(t+h)) -    t^4F(u(t))\right) \leq C. 
\end{eqnarray*}
By letting $h \rightarrow 0^+$ in the above inequality, we get
$$
\frac{d}{dt}\left(t^4F(u(t))\right) \leq C = \frac{d}{dt}\left(C t\right),
$$
which implies that $t \mapsto t^4F(u(t))-Ct$ is nonincreasing. Consequently,
\begin{equation}\label{ineq-estim}
f (u(t))- \inf_{\cH}f=F(u(t))\leq \left( t_0^4F(u(t_0))-C t_0\right)\frac1{t^4} + \frac{C}{t^3}  .
\end{equation}
We conclude that $f (u(t))- \inf_{\cH}f= \mathcal O\left( \frac{1}{t^3} \right)$ as $t \to +\infty$.

\medskip
 
$iii)$ Take now $\alpha >3$. We know that $v(t)$ converges weakly to some $v_{\infty} \in \argmin_{\cH} f$. The relation
$$\frac{d}{dt}(t^4u(t))=4t^3 v(t)$$
gives after integration
$$
t^4 u(t) = t_0^4 u(t_0) + \int_{t_0}^t 4\tau^3 v(\tau) d \tau.
$$
Equivalently
$$
u(t) =\frac{ t_0^4 u(t_0)}{t^4}  + \frac{ 1}{t^4}\int_{t_0}^t 4\tau^3 v(\tau)  d \tau.
$$
Note that $\int_{t_0}^t 4\tau^3   d \tau \sim t^4$ as $t\to +\infty$.   As a general rule, convergence implies ergodic convergence. Therefore, $u(t)$ converges weakly to  $u_{\infty}= v_{\infty} \in \argmin_{\cH} f$.\\
Moreover we know that 
\begin{equation}\label{change var88b}
f(v(t))-\inf_{\mathcal H} f
 =o\left(\frac{1}{t^{3}}\right) \, \mbox{ as  } t \to + \infty,
\end{equation}
which gives by a similar argument as above,   as  $t \to + \infty$
\begin{eqnarray*}
&& f\left(u(t)  + \frac14t\dot u(t)\right) - \inf_{\cH} f = \displaystyle{o \left( \frac{ 1}{t^3} \right)} \, \mbox{ and } \, f(u(t))-\inf_{\mathcal H} f
 =o\left(\frac{1}{t^{3}}\right).
\end{eqnarray*}
This completes the proof of Theorem \ref{thm_principal_b}.
\end{proof}
\begin{remark}
Let us analyze in more detail  the difference between the two dynamic systems of the third order (TOGES) and (TOGES-V). The key point is the choice of the parameter $p$ in the relation
$$pt^{p-1}v(t)=\frac{d}{dt}(t^pu(t))$$
which connects the two variables $v$ and $u$.
 In \cite{ACR-opt}, we took $ p = 1 $ and thus get  $v(t)=\frac{d}{dt}(tu(t))$. In this case, the convergence of the values for $u$ is only of order $ \mathcal O (1 / t) $. In contrast in (TOGES-V), our choice  is fixed at $ p = 4 $, which allows us to increase the speed of convergence to $\mathcal  O (1 / t^3) $. 
Thus, in Theorem  \ref{thm_principal_b}, we improve the results of  \cite[Theorem 2.1]{ACR-opt}. Indeed, under similar conditions, the convergence rate  of the values $f (u(t))- \inf_{\cH}f$
 passes from  $ \mathcal O\left( \frac{1}{t} \right)$  to $\mathcal O\left( \frac{1}{t^3} \right)$. 
\end{remark}

\section{The third-order dynamic with the Hessian driven damping}
In this section, $f$ is  a convex function which is twice continuously differentiable.  The Hessian of $f$ at $u$ is denoted by $\nabla^2 f (u)$. It belongs to $\mathcal L (\cH, \cH)$, its action on $\xi \in \cH$ is denoted by $\nabla^2 f (u)(\xi)$.
The introduction of the Hessian driven damping into (TOGES-V)
\begin{equation}
 \dddot u(t) +\frac{\alpha +7}t\ddot u(t)+  \frac{5(\alpha +1)}{t^2} \dot u(t)+   \nabla f\left(u(t)+\frac14t\dot u(t)\right)=0,
\end{equation}
 leads us to consider the following system, called (TOGES-VH)
\begin{equation*}
 \dddot u(t) +\frac{\alpha +7}t\ddot u(t)+  \frac{5(\alpha +1)}{t^2} \dot u(t)  +  \beta  \nabla^2 f\Big(u(t) + \frac14 t \dot{u}(t)\Big)\Big(\frac54 \dot{u}(t) + \frac14 t  \ddot{u}(t) \Big)+   \nabla f \Big(u(t) + \frac14 t \dot{u}(t)\Big) =0.
\end{equation*}
The  nonnegative parameters $\alpha$ and $\beta $ are  respectively attached to the viscous damping and to the geometric damping driven by the Hessian. When $\beta =0$, we recover the system (TOGES-V). With respect to (TOGES-V), the suffix "H" refers to Hessian. 
The Hessian driven damping has proved to be an efficient tool to control and attenuate the oscillations of inertial systems, which is a central issue for optimization purposes,  see \cite{AABR}, \cite{ACFR}, \cite{APR2},  \cite{BotCseLas},  \cite{SDJS}. In our context, we will confirm this fact on numerical examples.
Note that $ \nabla^2 f\Big(u(t) + \frac14 t \dot{u}(t)\Big)\Big(\frac54 \dot{u}(t) + \frac14 t  \ddot{u}(t) \Big)$ is exactly the time derivative of $\nabla f \Big(u(t) + \frac14 t \dot{u}(t)\Big)$. This  plays a key role in the following developments.
The following results could be obtained using arguments similar to those developed in the previous section. We choose to present an autonomous proof based on the decreasing properties of an ad hoc Lyapunov function. In doing so, we get another proof of the previous results (take $ \beta = 0 $), which is of independent interest. In addition, we will get explicit values of the constants that enter the convergence rates.
As a main result, when $\beta >0$, thanks to the geometric damping driven by the Hessian, we will obtain a  result of rapid convergence of the gradients.

\subsection{Convergence via Lyapunov analysis} 
The following convergence analysis is based on the decrease property of the function $t \mapsto E(t)$ defined by: given $z \in \argmin_{\cH} f$
\begin{eqnarray}\label{Lyap_E_h}
E(t)&:=& 4(t^3  -2\beta t^2 ) \left(f\Big(u(t)+ \frac14 t\dot{u}(t)\Big) - \inf_{\cH} f \right) \nonumber\\
&&+ \frac{1}{2}\left\Vert 
t^2 \Big( \ddot{u}(t )+  \beta \nabla f(u(t) + \frac14 t \dot{x}(t))\Big) +    (\alpha +5)t\dot{u}(t )+ 4\alpha (u(t )-z)\right\Vert^2. 
\end{eqnarray}
 It is convenient to work with the following condensed formulation of $E(t)$
%Let us rewrite the energy function $E(\cdot)$ defined in (\ref{Lyap_E_h}) as follows:
\begin{equation}\label{Lyap_h_2}
E(t):= 4t \delta (t)  (f(v(t))-\inf_{\cH} f)+\frac{1}{2}\Vert w(t)\Vert^2 ,
\end{equation}
where
\begin{eqnarray}
&& v(t):=u(t)+ \frac14 t\dot{u}(t); \\
&& w(t):=t^2 \Big( \ddot{u}(t )+  \beta \nabla f(u(t) + \frac14 t \dot{u}(t))\Big) +    (\alpha +5)t\dot{u}(t )+4\alpha (u(t )-z);\\
&& \delta(t)= t^2 \left(  1- \frac{2\beta}{t}\right).
\end{eqnarray}

\begin{Theorem}\label{Lyapunov_h}
Let $u: [t_0, +\infty[ \to \cH$ be a solution trajectory of the  evolution equation {\rm (TOGES-VH)}.

\medskip

$(i)$ \, Suppose that the parameters  satisfy the following condition: 
 $$   \alpha > 3 \quad  \mbox{and} \quad    \beta \geq 0.$$
 Then, for all $t\geq  t_1 :=   \frac{2\beta  (\alpha -2)}{\alpha -3} $
\begin{eqnarray*}
&&  f\left(u(t) + \frac14 t \dot{u}(t)\right)-\inf_{\cH} f \leq \frac{(\alpha -2) E (t_1)}{4 t^3}, \\
&& f (u(t))- \inf_{\cH}f \leq \left( t_1^4F(u(t_1))-(\alpha -2) E (t_1) t_1\right)\frac1{t^4} + \frac{(\alpha -2) E (t_1)}{t^3}  ,
\end{eqnarray*}
  with
\begin{eqnarray*}
E(t_1 )&=&4(t_1^3  -2\beta t_1^2 ) \Big(f(u(t_1)+\frac14 t_1\dot{u}(t_1))-\inf_{\cH} f \Big)\\
&+&\frac{1}{2}\Vert 
t_1^2 \Big( \ddot{x}(t_1)+  \beta \nabla f(u(t_1) + \frac14 t_1 \dot{u}(t_1)) \Big) +    (\alpha +5)t_1\dot{u}(t_1 )+4\alpha (u(t_1 )-z)\Vert^2. 
\end{eqnarray*}

\medskip

Moreover, we have an approximate descent method in the following sense:

\medskip

 The function $t \mapsto f (u(t))+\frac{(\alpha -2) E (t_1)}{3t^3}$ is nonincreasing on $[t_1, +\infty[$.

\bigskip

\noindent $(ii)$  In addition, for $\beta >0$ 
$$\int_{t_1 }^{+\infty} t^4  \|\nabla f(u(t) + t \dot{u}(t)) \|^2  dt \leq  \frac{(\alpha -2)E(t_1)}{\beta}.
$$
\end{Theorem}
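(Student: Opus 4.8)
The plan is to show that the energy $E$ of \eqref{Lyap_h_2} is nonincreasing on $[t_1,+\infty[$, with the sharper estimate $\dot E(t)\le-\beta t^2\delta(t)\,\|\nabla f(v(t))\|^2$ there; the four assertions then follow by routine post-processing, most of which parallels the previous section. The definition of $E$ is tuned to the coefficients of (TOGES-VH), and the crux is a cancellation identity for $w$. Since $\dot v(t)=\frac14 t\ddot u(t)+\frac54\dot u(t)$, the Hessian term $\nabla^2 f(v(t))\big(\frac54\dot u(t)+\frac14 t\ddot u(t)\big)$ occurring in (TOGES-VH) is precisely $\frac{d}{dt}\nabla f(v(t))$. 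Differentiating $w(t)=t^2\big(\ddot u(t)+\beta\nabla f(v(t))\big)+(\alpha+5)t\dot u(t)+4\alpha(u(t)-z)$ and substituting $t^2\dddot u(t)$ from (TOGES-VH), the terms carrying $\frac{d}{dt}\nabla f(v(t))$ cancel, the coefficient of $\ddot u(t)$ reduces to $2-(\alpha+7)+(\alpha+5)=0$, the coefficient of $\dot u(t)$ to $-5(\alpha+1)+(\alpha+5)+4\alpha=0$, and the surviving $\nabla f(v(t))$-terms sum to $-(t^2-2\beta t)$; that is,
\begin{equation*}
\dot w(t)=-\delta(t)\,\nabla f(v(t)).
\end{equation*}

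Granting this, $\frac{d}{dt}\big(\frac12\|w(t)\|^2\big)=-\delta(t)\langle w(t),\nabla f(v(t))\rangle$, while, writing $F=f-\inf_{\cH}f$ and using $\delta(t)+t\dot\delta(t)=3t^2-4\beta t$ and $4\dot v(t)=t\ddot u(t)+5\dot u(t)$, one has $\frac{d}{dt}\big(4t\delta(t)F(v(t))\big)=4(3t^2-4\beta t)F(v(t))+t\delta(t)\langle\nabla f(v(t)),\,t\ddot u(t)+5\dot u(t)\rangle$. Expanding $w(t)$ in the cross term, the $\ddot u(t)$-contributions cancel, the $\dot u(t)$-coefficient collapses to $5-(\alpha+5)=-\alpha$, and regrouping the $\dot u(t)$- and $(u(t)-z)$-terms via $t\dot u(t)+4(u(t)-z)=4(v(t)-z)$ gives
\begin{align*}
\dot E(t)&=4\big[(3-\alpha)t^2+2(\alpha-2)\beta t\big]F(v(t))\\
&\quad -4\alpha\delta(t)\langle\nabla f(v(t)),v(t)-z\rangle-\beta t^2\delta(t)\|\nabla f(v(t))\|^2.
\end{align*}
Convexity of $f$ together with $z\in\argmin_{\cH}f$ gives $\langle\nabla f(v(t)),v(t)-z\rangle\ge F(v(t))$, so, as soon as $\delta(t)\ge0$, $\dot E(t)\le4\big[(3-\alpha)t^2+2(\alpha-2)\beta t\big]F(v(t))-\beta t^2\delta(t)\|\nabla f(v(t))\|^2$. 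The bracket is $\le0$ exactly when $(3-\alpha)t+2(\alpha-2)\beta\le0$, that is $t\ge t_1=\frac{2\beta(\alpha-2)}{\alpha-3}$; and since $\frac{\alpha-2}{\alpha-3}\ge1$ we have $t_1\ge2\beta$, hence $\delta(t)\ge0$ on $[t_1,+\infty[$. Therefore $\dot E(t)\le-\beta t^2\delta(t)\|\nabla f(v(t))\|^2\le0$ for all $t\ge t_1$.

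From here the conclusions are immediate. As $E(t)\ge4t\delta(t)F(v(t))\ge0$ and $E$ is nonincreasing, $F(v(t))\le E(t_1)/(4t\delta(t))$ for $t\ge t_1$; since $4t\delta(t)=4t^3\big(1-\frac{2\beta}{t}\big)\ge4t^3\big(1-\frac{2\beta}{t_1}\big)=\frac{4t^3}{\alpha-2}$ on that range, this yields $f(v(t))-\inf_{\cH}f\le\frac{(\alpha-2)E(t_1)}{4t^3}$. The estimate on $f(u(t))-\inf_{\cH}f$ then follows exactly as in the proof of Theorem~\ref{thm_principal_b}$(ii)$: from $\frac{d}{dt}(t^4u(t))=4t^3v(t)$, Jensen's inequality and the previous bound give $F(u(t+h))\le\big(\frac{t}{t+h}\big)^4F(u(t))+\frac{(\alpha-2)E(t_1)\,h}{(t+h)^4}$ for $t\ge t_1$, whence $\frac{d}{dt}\big(t^4F(u(t))\big)\le(\alpha-2)E(t_1)$, and integration from $t_1$ gives the stated inequality. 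The same differential inequality together with $F(u(t))\ge0$ yields $\frac{d}{dt}\big(F(u(t))+\frac{(\alpha-2)E(t_1)}{3t^3}\big)\le-\frac{4}{t}F(u(t))\le0$, which is the approximate descent property. Finally, for $\beta>0$, integrating $\dot E(t)\le-\beta t^2\delta(t)\|\nabla f(v(t))\|^2$ from $t_1$ to $+\infty$ and using $E\ge0$ gives $\beta\int_{t_1}^{+\infty}t^2\delta(t)\|\nabla f(v(t))\|^2\,dt\le E(t_1)$, and $t^2\delta(t)\ge\frac{t^4}{\alpha-2}$ for $t\ge t_1$ produces assertion $(ii)$.

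The only genuinely delicate point is the bookkeeping in $\dot w(t)$ and $\dot E(t)$: one must verify that, after inserting $\dddot u(t)$ from (TOGES-VH), every coefficient of $\ddot u(t)$ and $\dot u(t)$ cancels and the Hessian terms cancel, which is exactly what the specific constants $\alpha+7$, $5(\alpha+1)$, $\alpha+5$, $4\alpha$ in (TOGES-VH) and in the definition of $w$ are engineered to do. Everything else — the convexity inequality, the threshold $t_1$, the elementary bound $1-\frac{2\beta}{t}\ge\frac1{\alpha-2}$ for $t\ge t_1$, and the integrations — is routine.
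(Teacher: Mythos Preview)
Your approach is essentially the same as the paper's: compute $\dot w(t)=-\delta(t)\nabla f(v(t))$, pair it with $w$, use convexity, and read off the decay of $E$. The post-processing (Jensen argument for $F(u(t))$, the approximate descent, and the integral estimate via $1-\tfrac{2\beta}{t}\ge\tfrac{1}{\alpha-2}$ on $[t_1,\infty[$) is correct and matches the paper.

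There is, however, a bookkeeping slip in your displayed \emph{exact} formula for $\dot E(t)$. After the cancellations you describe, the identity is
\[
\dot E(t)=4(3t^2-4\beta t)\,F(v(t))-4\alpha\,\delta(t)\,\langle\nabla f(v(t)),v(t)-z\rangle-\beta t^2\delta(t)\,\|\nabla f(v(t))\|^2,
\]
since the $F(v(t))$-coefficient is $4(\delta(t)+t\dot\delta(t))=4(3t^2-4\beta t)$, not $4[(3-\alpha)t^2+2(\alpha-2)\beta t]$. Your displayed coefficient already incorporates $-4\alpha\delta(t)F(v(t))$, which is precisely what the convexity inequality $\langle\nabla f(v(t)),v(t)-z\rangle\ge F(v(t))$ contributes; yet you also retain the cross term and then discard it using only $\langle\nabla f(v(t)),v(t)-z\rangle\ge 0$. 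In other words, convexity is being counted twice. The fix is simple: start from the correct identity above, use $-4\alpha\delta(t)\langle\nabla f(v(t)),v(t)-z\rangle\le -4\alpha\delta(t)F(v(t))$ (valid once $\delta(t)\ge 0$), and combine to obtain
\[
\dot E(t)\le 4\big[(3-\alpha)t^2+2(\alpha-2)\beta t\big]F(v(t))-\beta t^2\delta(t)\|\nabla f(v(t))\|^2,
\]
which is exactly your final inequality (and the paper's \eqref{Lyap_h_7}). So the conclusion stands; only the intermediate line needs correction.
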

\begin{proof}
Let us compute the time derivative of the energy function $E(\cdot)$ which is written as follows:
\begin{equation*}
E(t):= 4t \delta (t)  (f(v(t))-\inf_{\cH} f)+\frac{1}{2}\Vert w(t)\Vert^2 .
\end{equation*}
According to the derivation chain rule, we get
\begin{equation}\label{Lyap_h_3}
\dot{E}(t):= 4t \delta (t)  \langle \nabla f(v(t)),  \dot{v}(t )\rangle + 4(\delta (t) + t\dot{\delta} (t) )(f(v(t))-\inf_{\cH} f) +
\langle w(t),  \dot{w}(t)\rangle .
\end{equation}
Let us compute $\dot{w}(t )$.
\begin{eqnarray*} 
\dot{w}(t)&=&t^{2}\left(\dddot{u}(t)+\beta  \nabla^2 f\Big(u(t) + \frac14 t \dot{u}(t)\Big)\Big(\frac54 \dot{u}(t) + \frac14 t  \ddot{u}(t) \Big)  \right) +2t \left(   \ddot{u}(t )+  \beta \nabla f\Big(u(t) + \frac14 t \dot{u}(t)\Big)  \right)\\
&& + (\alpha +5)\Big(t\ddot{u}(t)+\dot{u}(t)\Big)  +4\alpha \dot{u}(t)\\
&=& \left(t^{2}\dddot{u}(t) + (\alpha+7) t \ddot{u}(t) + 5(\alpha +1)\dot{u}(t)+  t^{2}\beta  \nabla^2 f\Big(u(t) + \frac14 t \dot{u}(t)\Big)\Big(\frac54  \dot{u}(t) + \frac14 t  \ddot{u}(t)\Big) \right)\\
&&+ 2t \beta \nabla f\Big(u(t)  + \frac14 t \dot{u}(t) \Big)\\
 &=& -t^2 \nabla f\Big(u(t) + \frac14 t \dot{u}(t) \Big) + 2t \beta \nabla f\Big(u(t) + \frac14 t \dot{u}(t) \Big)
 = -\delta (t) \nabla f(v(t))
\end{eqnarray*}
where the two last inequalities follow from the equation (TOGES-VH) and the definition of $v(\cdot)$.
Let us give an equivalent formulation of $w(t)$ using $v(t)$. This will be useful for the rest of the proof.
According to the definition of $v(t)$, we have $t\ddot{u}(t )= 4 \dot{v}(t ) -5 \dot{u}(t ) $. This gives
\begin{eqnarray} 
 w(t) &=& t^2 \left( \ddot{u}(t )+  \beta \nabla f\Big(u(t) + \frac14 t \dot{u}(t)\Big) \right) +    (\alpha +5)t\dot{u}(t )+4\alpha (u(t )-z) \nonumber \\
&=& t (4 \dot{v}(t ) -5 \dot{u}(t ) ) + t^2 \beta \nabla f(v(t)) 
+  (\alpha +5)t\dot{u}(t )+4\alpha (u(t )-z ) \nonumber \\
&=& 4t \dot{v}(t) + t^2 \beta \nabla f(v(t))+ \alpha (t\dot{u}(t )+4u(t)) -4 \alpha z \nonumber \\
&=& 4t \dot{v}(t) + t^2 \beta \nabla f(v(t))+ 4\alpha (v(t) -z).  \label{Lyap_w}
\end{eqnarray} 
Therefore
\begin{eqnarray} 
\langle   \dot{w}(t), w(t)\rangle &=& -\delta (t) \langle   \nabla f(v(t)), 4t \dot{v}(t) + t^2 \beta \nabla f(v(t))+ 4\alpha (v(t) -z) \rangle \nonumber \\
&=& -\beta t^2 \delta (t) \|\nabla f(v(t)) \|^2 -4t\delta (t) 
\langle   \nabla f(v(t)), \dot{v}(t )\rangle
-4\alpha \delta(t) \langle   \nabla f(v(t)), v(t )-z\rangle \nonumber \\
&\leq & -\beta t^2 \delta (t) \|\nabla f(v(t)) \|^2 -4t\delta (t) 
\langle   \nabla f(v(t)), \dot{v}(t )\rangle
+4\alpha \delta(t) (f(z) - f(v(t)) \label{Lyap_h_4}
\end{eqnarray} 
where the last inequality follows from the convexity of $f$.
Combining (\ref{Lyap_h_3})  with  (\ref{Lyap_h_4}) we obtain
\begin{eqnarray}\label{Lyap_h_5}
\dot{E}(t)&\leq & 4t \delta (t)  \langle \nabla f(v(t)),  \dot{v}(t )\rangle + 4(\delta (t) + t\dot{\delta} (t) )(f(v(t))-\inf_{\cH} f) \nonumber\\
&-&\beta t^2 \delta (t) \|\nabla f(v(t)) \|^2 -4t\delta (t) 
\langle   \nabla f(v(t)), \dot{v}(t )\rangle
+4\alpha \delta(t) (f(z) - f(v(t))).
\end{eqnarray} 
After simplification we get
\begin{equation}\label{Lyap_h_6}
\dot{E}(t) + \beta t^2 \delta (t) \|\nabla f(v(t)) \|^2 +     
4\Big((\alpha -1)\delta (t)  - t\dot{\delta} (t) \Big)
(f(v(t))-\inf_{\cH} f)\leq 0.
\end{equation}
According to the definition of $\delta(\cdot)$
\begin{eqnarray*}
(\alpha -1)\delta (t)  - t\dot{\delta} (t)&=&(\alpha-1)(t^2 -2\beta t)  - t(2t - 2\beta)\\
&=& (\alpha-3)t^2 -2\beta t (\alpha -2).
\end{eqnarray*} 
Therefore, 
\begin{equation}\label{Lyap_h_7}
\dot{E}(t) + \beta t^2 \delta (t) \|\nabla f(v(t)) \|^2 +     
4\Big((\alpha-3)t^2 -2\beta t (\alpha -2) \Big)
(f(v(t))-\inf_{\cH} f)\leq 0.
\end{equation}
For $\alpha > 3$ and $t \geq t_1=   \frac{2\beta  (\alpha -2)}{\alpha -3}$ (\ie $t$ sufficiently large), we have 
$\dot{E}(t) \leq 0$. 
So, for all $t \geq t_1$ we have $E(t) \leq E(t_1)$, which by definition of $E(\cdot)$ gives
$$
4 t^3 \left(  1- \frac{2\beta}{t}\right)(f(v(t))-\inf_{\cH} f) \leq E (t_1).
$$
Note that for $t \geq t_1$ we have $1- \frac{2\beta}{t} \geq \frac{1}{\alpha -2}$. Therefore, for $t\geq t_1$
$$
f\left(u(t) + \frac14 t \dot{u}(t)\right) -\inf_{\cH} f \leq \frac{(\alpha -2) E (t_1)}{4 t^3}.
$$
Moreover by integrating (\ref{Lyap_h_6}) we obtain
$$
\beta \int_{t_1 }^{+\infty} t^2 \delta (t) \left\|\nabla f\left(u(t) + \frac14 t \dot{u}(t)\right) \right\|^2  dt \leq  E(t_1).
$$
By a calculation similar to the one above, we obtain
\begin{equation}\label{Lyap_h_8}
\int_{t_1 }^{+\infty} t^4 \left\|\nabla f\left(u(t) + \frac14 t \dot{u}(t)\right) \right\|^2  dt \leq \frac{(\alpha -2) E (t_1)}{\beta}.
\end{equation}
Then, to pass from estimates on $ y (t) $ to estimates on $ x (t) $, we proceed as in Theorem \ref{thm_principal_b}. Precisely,
define $C= (\alpha -2) E (t_1)$, and set $F= f- \inf f$.
According to $(i)$, we obtained $ t^3 F(v(t) \leq \frac14 C$.\\
On the basis of Jensen's inequality, we obtained in the proof of Theorem \ref{thm_principal_b} that
\begin{equation}\label{ineq-estim_H}
\frac{d}{dt}\left(t^4F(u(t))\right) \leq 
4 \sup_t  t^3 F(v(t)\leq C= (\alpha -2) E (t_1).
\end{equation}
This implies that $t \mapsto t^4F(u(t))-Ct$ is nonincreasing on $[t_1, +\infty[$. Consequently,
\begin{equation}\label{ineq-estim_H_2}
f (u(t))- \inf_{\cH}f\leq \left( t_1^4F(u(t_1))-C t_1\right)\frac1{t^4} + \frac{C}{t^3}  .
\end{equation}
We conclude that $f (u(t))- \inf_{\cH}f= \mathcal O\left( \frac{1}{t^3} \right)$ as $t \to +\infty$.
Let's return to \eqref{ineq-estim_H}. We have
$$
t^4 \frac{d}{dt}\left(F(u(t))\right) + 4t^3 F(u(t)) \leq C.
$$
Since $F(u(t)) \geq 0$, we get
$
\frac{d}{dt}\left(F(u(t))\right) \leq \frac{C}{t^4}.
$
Equivalently
$
\frac{d}{dt}\left(F(u(t)) + \frac{C}{3t^3}\right) \leq 0.
$
Therefore,
$t \mapsto f (u(t)) +\frac{(\alpha -2) E (t_1)}{3t^3} $ is nonincreasing on $[t_1, +\infty[$.
\end{proof}

\subsection{Case $\alpha =3$}
In this case, equation \eqref{Lyap_h_7} becomes
$
\dot{E}(t) \leq     8\beta t (f(v(t))-\inf_{\cH} f),
$
which, after integration, gives
\begin{equation}\label{Lyap_h_777}
E(t) \leq   E(t_0) +   8\beta \int_{t_0}^t \tau  (f(v(\tau))-\inf_{\cH} f) d\tau.
\end{equation}
Set $F(t)= f(v(t))-\inf_{\cH} f$. According to the definition of $E(t)$, we deduce from \eqref{Lyap_h_777} that
\begin{equation}\label{Lyap_h_7777}
4 t \delta(t) F( t) \leq   E(t_0) +   8\beta \int_{t_0}^t \tau  F(\tau) d\tau.
\end{equation}
Let us apply the Gronwall lemma. After elementary compuation, we get
$F( t) \leq C/t^3$. Thus, the conclusions of Theorem \ref{Lyapunov_h} are still valid in the limiting case $\alpha =3$.

\section{Numerical illustrations}
\noindent Let us compare the systems  (TOGES), (TOGES-V), (TOGES-VH). Take $\alpha=3$ for  (TOGES) and  (TOGES-V), so as to respect the condition $\alpha \geq3$, and take $\beta=1$ in (TOGES-VH). We get
 
 \smallskip
  
(TOGES) \, \, \,${\dddot u+\frac{7}t\ddot u+\frac{8}{t^2}\dot u+\nabla f(u+t\dot u)=0}$, 

\medskip

 (TOGES-V) \,
${\dddot u +\frac{10}t\ddot u+  \frac{20}{t^2} \dot u+\nabla f\left(u+\frac{t}4\dot u\right)=0}$,

\medskip

 (TOGES-VH) 
${\dddot u +\frac{10}t\ddot u+  \frac{20}{t^2} \dot u+\nabla f\left(u+\frac{t}4\dot u\right)+\frac14\nabla^2f\left(u(t)+\frac{t}4\dot u\right)\left(5\dot u+4\ddot u\right)=0}$.

\medskip

\noindent Take $\cH= \R \times \R$, and  consider the three test functions:

\vspace{3mm}

\begin{itemize}

\item $f_1(x_1,x_2)= \frac{1}{2}\left( x_1^2+x_2^2\right)$,

\item   $f_2(x_1,x_2) =\frac12\left(x_1+x_2-1\right)^2$ for $(x_1,x_2)\in \mathbb R \times \mathbb R$  
 
\item  $f_3(x_1,x_2)= \left(x_1+x_2^2\right)-2\ln(x_1+1)(x_2+1)$, for $(x_1,x_2)\in \mathbb R_+ \times \mathbb R_+$;

\end{itemize}

\medskip

\noindent Given the  initial conditions $u(0)=(3,1), \dot{u}(0)=\ddot{u}(0)=(0,0)$, we have represented in  Figure \ref{Fig1} for each function $f_i$ ($i=1,2,3$) the trajectories corresponding to  (TOGES) (blue), (TOGES-V) (green) and (TOGES-VH) (red). Without ambiguity, we omit the index $i$ for $f_i$. 
It appears clearly that, for all the  convex functions proposed, (TOGES-V) is faster than  (TOGES)    in convergence of values. This confirms the best estimate of  values $f(u(t)) - \inf_{\cH} f =   \displaystyle{ \mathcal O\left(  1/t^3 \right)}$ obtained in Theorem \ref{thm_principal_b}.
 In this numerical example, we confirm the interest of strengthening the system (TOGES-V) by adding  the Hessian damping. As it appears in the first column of Figure 1, all the trajectories $u(\cdot)$   associated with (TOGES-VH) are more stable and neutralize the oscillations.
\begin{center}
\begin{figure}
\begin{center}
  \includegraphics[width=15.4cm, height=14cm]{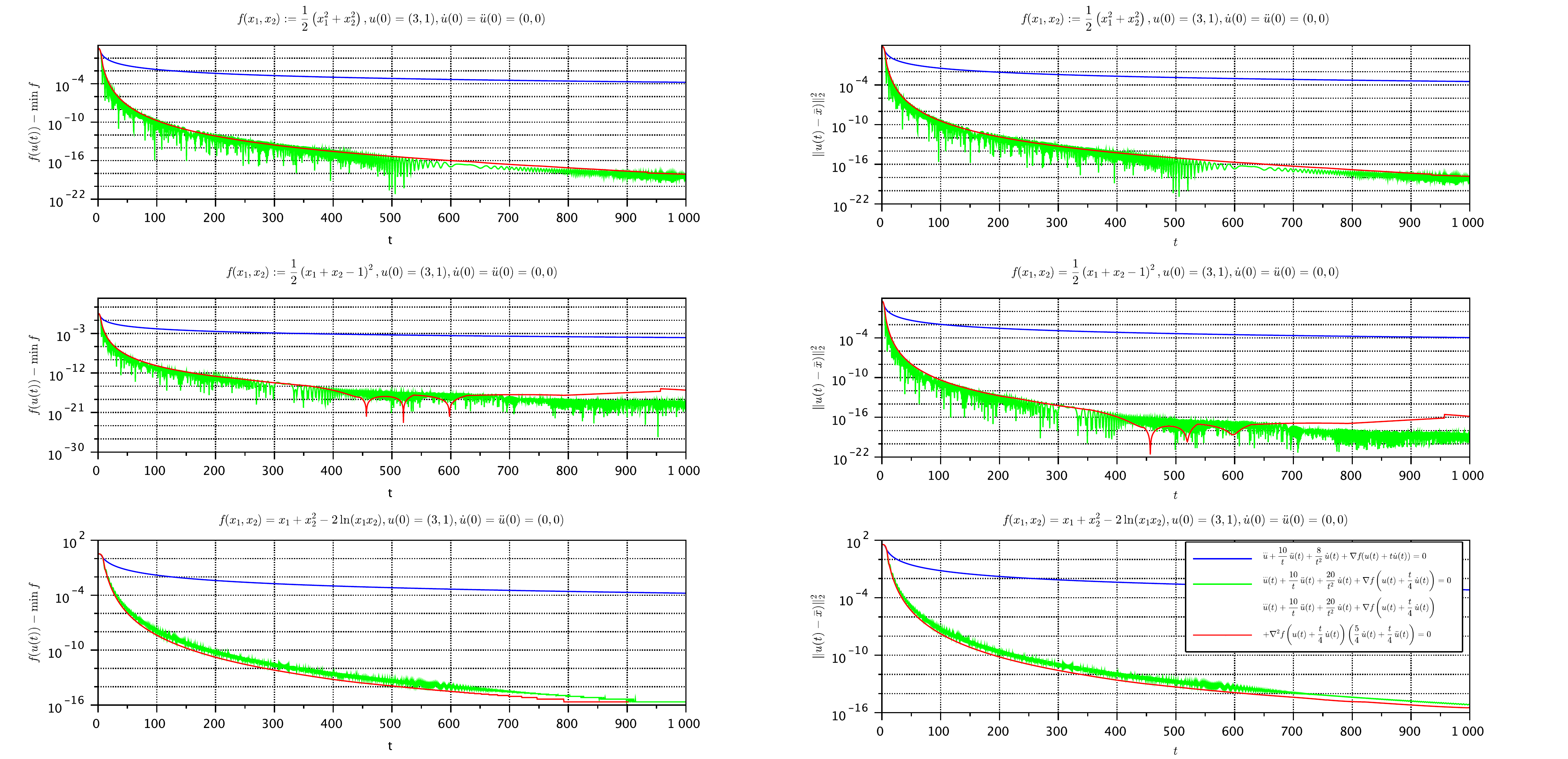}
  \end{center}
  \caption{{\small 
Convergence rates  for (TOGES) (blue),  (TOGES-V) (green), (TOGES-VH) (red)
}} \label{Fig1}
\end{figure}
  \end{center}

%%%%%%%%%%%%%%%%%%%%%%%%%%%%%%%%%%%%%
\section{The strongly convex case}
\label{sec:sc}
When $f$ is $\mu$-strongly convex, we will show exponential convergence rates for the trajectories generated by the third-order autonomous evolution equation:
\begin{equation}\label{basic_cc}
\dddot{u}(t)+3\sqrt{\mu}\ddot{u}(t)+2\mu\dot{u}(t)+\sqrt{\mu}\nabla f\left(u(t)+\frac1{\sqrt{\mu}}\dot{u}(t)\right)=0.
\end{equation}
\subsection{Lyapunov analysis}
We will develop a Lyapunov analysis based on the decreasing property of the function $t \mapsto \mathcal E (t)$ where, for all $t\geq t_0$
$$
\mathcal E (t):= f\left(  u(t)+\frac1{\sqrt{\mu}}\dot u(t)\right)-  \inf_{\mathcal H}f  + \frac{1}{2} \left\| \sqrt{\mu} ( u(t)+\frac1{\sqrt{\mu}}\dot u(t) -x^*) +  \dot{u}(t) +  \frac1{\sqrt{\mu}}\ddot{u}(t)\right\|^2,
$$
and  $x^*$ is the unique global minimizer of $f$ on $\mathcal{H}$.
To condense the formulas, we write 
$$F(t_0)= f(u(t_0)) - \inf_{\cH}f  \, \mbox{ and } \, C= \mathcal E (t_0) e^{\sqrt{\mu}t_0} .$$
\begin{Theorem}\label{strong-conv-thm}
Suppose that $f: \cH \to \mathbb R$ is $\mu$-strongly convex for some $\mu >0$.
Let  $u(\cdot)$ be a solution trajectory of {\rm(\ref{basic_cc})}.
Then, the following properties hold:  
\medskip
For all $t\geq t_0$, we have 
\begin{eqnarray}
 &&  f\left(u(t)+ \frac1{\sqrt{\mu}}\dot u(t)\right)-  \inf_{\mathcal H}f \leq \mathcal E (t_0) e^{-\sqrt{\mu}(t-t_0)};\label{sc0a}\\
&&  f(u(t)) - \inf_{\cH}f  \leq    \left(C\sqrt{\mu}t + e^{\sqrt\mu t_0}F(t_0) -C \sqrt{\mu}t_0\right)e^{-\sqrt{\mu}t} ;\label{sc0c} \\
 && \| u(t)+ \frac1{\sqrt{\mu}}\dot u(t) -x^*\| ^2 \leq \frac{e^{\sqrt{\mu}t_0}}{\sqrt{\mu}}\left( \sqrt{\mu}  \| y(t_0) -x^*\|^2+2\mathcal E (t_0) (t-t_0 )\right)  e^{-\sqrt{\mu}t};  \label{sc1}\\
 && 
\| u(t) -x^*\|^2  \leq \frac{2}{\mu}  \left(C\sqrt{\mu}t + e^{\sqrt\mu t_0}F(t_0) -C \sqrt{\mu}t_0\right)e^{-\sqrt{\mu}t}.   \label{sc0d}
 \end{eqnarray} 
\end{Theorem}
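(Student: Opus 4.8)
The plan is to reduce the autonomous third-order equation \eqref{basic_cc} to an autonomous second-order one via the change of variable $v(t):=u(t)+\frac{1}{\sqrt\mu}\dot u(t)$, and then to run a direct Lyapunov argument on $v$. First, since $\dot v=\dot u+\frac1{\sqrt\mu}\ddot u$ and $\ddot v=\ddot u+\frac1{\sqrt\mu}\dddot u$, substituting $\dddot u$ from \eqref{basic_cc} and using the identity $\ddot u+\sqrt\mu\,\dot u=\sqrt\mu\,\dot v$, I would check that the extra terms telescope and that $v$ solves the clean second-order equation $\ddot v(t)+2\sqrt\mu\,\dot v(t)+\nabla f(v(t))=0$; the particular coefficients $3\sqrt\mu,\,2\mu,\,\sqrt\mu$ in \eqref{basic_cc} are precisely those making this happen. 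Under the same substitution, and using $\dot u+\frac1{\sqrt\mu}\ddot u=\dot v$, the Lyapunov function condenses to $\mathcal E(t)=\big(f(v(t))-\inf_{\cH}f\big)+\frac12\big\|\sqrt\mu\,(v(t)-x^*)+\dot v(t)\big\|^2$.

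Next I would differentiate $\mathcal E$ along the $v$-trajectory. Replacing $\ddot v$ via the second-order equation gives $\sqrt\mu\,\dot v+\ddot v=-\sqrt\mu\,\dot v-\nabla f(v)$, the terms $\langle\nabla f(v),\dot v\rangle$ then cancel, and after forming $\dot{\mathcal E}+\sqrt\mu\,\mathcal E$ the pure quadratic-in-$(v-x^*)$ terms cancel as well. The surviving term $-\sqrt\mu\langle v-x^*,\nabla f(v)\rangle$ is handled by the $\mu$-strong convexity inequality $\langle\nabla f(v),v-x^*\rangle\ge \big(f(v)-\inf_\cH f\big)+\frac\mu2\|v-x^*\|^2$, which leaves $\dot{\mathcal E}(t)+\sqrt\mu\,\mathcal E(t)\le-\frac{\sqrt\mu}{2}\|\dot v(t)\|^2\le 0$, hence $\mathcal E(t)\le \mathcal E(t_0)e^{-\sqrt\mu(t-t_0)}$. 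Since $f(v(t))-\inf_\cH f\le \mathcal E(t)$, this is exactly \eqref{sc0a}.

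For \eqref{sc1} I would use the algebraic identity $\sqrt\mu\|v-x^*\|^2+2\langle v-x^*,\dot v\rangle=\frac1{\sqrt\mu}\big(\|\sqrt\mu(v-x^*)+\dot v\|^2-\|\dot v\|^2\big)\le\frac2{\sqrt\mu}\mathcal E(t)$, which yields $\frac{d}{dt}\big(e^{\sqrt\mu(t-t_0)}\|v(t)-x^*\|^2\big)\le\frac2{\sqrt\mu}\mathcal E(t_0)$; integrating over $[t_0,t]$ produces the extra linear factor $(t-t_0)$ appearing in \eqref{sc1}. To pass back to $u$ for \eqref{sc0c}, I would use $\frac{d}{dt}\big(e^{\sqrt\mu t}u(t)\big)=\sqrt\mu\,e^{\sqrt\mu t}v(t)$, which exhibits $u(t)$ as the convex combination with weights $e^{-\sqrt\mu(t-t_0)}$ and $1-e^{-\sqrt\mu(t-t_0)}$ of $u(t_0)$ and the average of $\{v(s):s\in[t_0,t]\}$ against the probability weight proportional to $e^{\sqrt\mu s}$; Jensen's inequality applied to $F=f-\inf_\cH f$ exactly as in the proof of Theorem \ref{thm_principal_b}, together with $F(v(s))\le \mathcal E(t_0)e^{-\sqrt\mu(s-t_0)}=Ce^{-\sqrt\mu s}$, gives \eqref{sc0c}, and then \eqref{sc0d} follows immediately from \eqref{sc0c} and $\frac\mu2\|u(t)-x^*\|^2\le f(u(t))-\inf_\cH f$.

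The only genuinely delicate point is the reduction in the first step: recognizing that \eqref{basic_cc} collapses to $\ddot v+2\sqrt\mu\,\dot v+\nabla f(v)=0$ and that the displayed $\mathcal E$ is exactly the natural Lyapunov function for that equation. Once this is in hand, the remaining steps are routine — a Lyapunov differentiation closed off by strong convexity, one quadratic identity, and the same Jensen / convex-combination device already used in Theorem \ref{thm_principal_b} — the only care required being to keep track of the constants $C=\mathcal E(t_0)e^{\sqrt\mu t_0}$ and $F(t_0)=f(u(t_0))-\inf_\cH f$ through the back-substitution.
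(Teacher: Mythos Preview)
Your proposal is correct and follows essentially the same route as the paper: reduce \eqref{basic_cc} to $\ddot v+2\sqrt\mu\,\dot v+\nabla f(v)=0$ via $v=u+\frac1{\sqrt\mu}\dot u$, run the Lyapunov computation on $\mathcal E$ to get $\dot{\mathcal E}+\sqrt\mu\,\mathcal E\le 0$, then obtain \eqref{sc1} from the quadratic identity and \eqref{sc0c} via the convex-combination/Jensen device, with \eqref{sc0d} by strong convexity. The only visible difference is that you apply Jensen directly on $[t_0,t]$, whereas the paper integrates on $[t,t+h]$, lets $h\to 0^+$ to derive the differential inequality $F'(t)+\sqrt\mu\,F(t)\le C\sqrt\mu\,e^{-\sqrt\mu t}$, and then integrates; your shortcut is legitimate and reaches the same constant in \eqref{sc0c}.
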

%%%%%
\begin{proof} (a) 
Set $y(t)= u(t)+ \frac1{\sqrt{\mu}}\dot u(t)$. Note that \eqref{basic_cc} can be equivalently written
\begin{equation}\label{dyn-sc}
\ddot{y}(t) + 2\sqrt{\mu} \dot{y}(t)   + \nabla f (y(t)) = 0,
\end{equation}
and that  $\mathcal E : [t_0, +\infty[ \to \mathbb R^+ $  writes
\begin{equation}\label{E_def_y}
\mathcal E (t) = f(y(t))-  \inf_{\mathcal H}f  + \frac{1}{2} \| \sqrt{\mu} (y(t) -x^*) + \dot{y}(t) \|^2.
\end{equation}
Set $v(t)= \sqrt{\mu} (y(t) -x^*) + \dot{y}(t) $.
Derivation of $\mathcal E (\cdot) $ gives 
$$
\frac{d}{dt}\mathcal E (t)   =  \langle  \nabla f (y(t)),  \dot{y}(t)       \rangle  + \langle v(t), \sqrt{\mu}  \dot{y}(t)  +  \ddot{y}(t)      \rangle .
$$
According to (\ref{dyn-sc}), we obtain
$$
\frac{d}{dt}\mathcal E (t)=\langle  \nabla f (y(t)),  \dot{y}(t)       \rangle  + \langle \sqrt{\mu} (y(t) -x^*) + \dot{y}(t) , -\sqrt{\mu}  \dot{y}(t)  - \nabla f (y(t))       \rangle .
$$
After developing and simplification, we obtain
$$
\frac{d}{dt}\mathcal E (t) + \sqrt{\mu}\langle  \nabla f (y(t)),  y(t) -x^*       \rangle  + \mu\langle y(t) -x^* , \dot{y}(t)\rangle + \sqrt{\mu} \| \dot{y}(t) \|^2   = 0.
$$
According to the strong convexity of $f$, we have
$$
\langle  \nabla f (y(t)),  y(t) -x^*       \rangle  \geq f(y(t))- f(x^*) + \frac{\mu}{2} \| y(t) -x^* \|^2 .
$$
By combining the two relations above, and using the formulation (\ref{E_def_y}) of $\mathcal E (t)$, we obtain 
$$
\frac{d}{dt}\mathcal E (t) + \sqrt{\mu}\left(\mathcal E (t)      
+ \frac{1}{2} \|  \dot{y} \|^2      \right) \leq 0.
$$
Therefore,
$
\frac{d}{dt}\mathcal E (t) + \sqrt{\mu}\mathcal E (t)   \leq 0.
$
By integrating this differential inequality, we obtain, for all $t\geq t_0$
$$
\mathcal E (t)   \leq \mathcal E (t_0) e^{-\sqrt{\mu}(t-t_0)}  .
$$
By definition of $\mathcal E (t)$ and $y(t)$, we deduce that
\begin{equation}\label{basic_sc}
f(y(t))-  \inf_{\mathcal H}f =  f\left(u(t)+ \frac1{\sqrt{\mu}}\dot u(t)\right)-  \inf_{\mathcal H}f 
\leq \mathcal E (t_0) e^{-\sqrt{\mu}(t-t_0)} ,
\end{equation}
and 
\begin{equation*}
\| \sqrt{\mu} (y(t) -x^*) + \dot{y}(t) \|^2=   \left\| \sqrt{\mu} \Big(u(t)+ \frac1{\sqrt{\mu}}\dot u(t) -x^*\Big) + \dot u(t) + \frac1{\sqrt{\mu}}\ddot u(t)\right\|^2 
\leq 2 \mathcal  E (t_0) e^{-\sqrt{\mu}(t-t_0)} .
\end{equation*}
\medskip
 (b) \, Set $C= \mathcal E (t_0) e^{\sqrt{\mu}t_0}$. Developing the expression above, we obtain
\begin{eqnarray*}
\mu \| y(t) -x^*\|^2 + \|  \dot{y}(t)\|^2 
+   \left\langle \dot{y}(t),     2\sqrt{\mu}  ( y(t) -x^*)\right\rangle  \leq 2C e^{-\sqrt{\mu}t}.
\end{eqnarray*}
Note that
\begin{eqnarray*}
\left\langle \dot{y}(t),   2\sqrt{\mu}  ( y(t) -x^*)\right\rangle= \frac{d}{dt} \left(\sqrt{\mu} \| y(t) -x^*\|^2  \right).  
\end{eqnarray*}
Combining the above results, we obtain
\begin{eqnarray*}
\sqrt{\mu}  \left( \sqrt{\mu}  \| y(t) -x^*\|^2 \right) +   \frac{d}{dt} \left( \sqrt{\mu} \| y(t) -x^*\|^2  \right) \leq 2C e^{-\sqrt{\mu}t}.
\end{eqnarray*}
Set $Z(t):= \sqrt{\mu}  \| y(t) -x^*\|^2 $, then we have
$$
\frac{d}{dt} \left( e^{\sqrt{\mu}t}Z(t)-2Ct\right) = e^{\sqrt{\mu}t}\frac{d}{dt} Z(t) + \sqrt{\mu}  e^{\sqrt{\mu}t}Z(t) -2C
 \leq 0.
$$
By integrating this differential inequality,  elementary computation gives 
$$
Z(t)\leq e^{-\sqrt{\mu}(t-t_0)}Z(t_0)+2C(t-t_0 )e^{-\sqrt{\mu}t}.
$$
Therefore
$$
\| y(t) -x^*\| ^2\leq \frac{e^{\sqrt{\mu}t_0}}{\sqrt{\mu}}\left( \sqrt{\mu}  \| y(t_0) -x^*\|^2+2\mathcal E (t_0) (t-t_0 )\right)  e^{-\sqrt{\mu}t}.
$$
 (c) \, Let's now analyze the convergence rate of values
for $ f(u(t))-\displaystyle{\inf_{\mathcal H} f} $. We start from (\ref{basic_sc}), which can be equivalently written  as follows: for all $t\geq t_0$
\begin{equation}\label{Jensen1_sc}
f(y(t))-\inf_{\mathcal H} f
 \leq  C e^{-\sqrt{\mu}t},
\end{equation}
where, we recall, $C= \mathcal E (t_0) e^{\sqrt{\mu}t_0} $.
By integrating the relation $\frac{d}{dt} (e^{\sqrt\mu t} u(t)) =\sqrt\mu e^{\sqrt\mu t}y(t)$
from $t$ to $t+h$ ($h$ is a positive parameter which is intended to go to zero), we get 
$$
e^{\sqrt\mu (t+h)} u(t+h) = e^{\sqrt\mu t} u(t)  + \int_{t}^{t+h}\sqrt\mu e^{\sqrt\mu \tau} y(\tau) d \tau.
$$
Equivalently,
$
u(t+h) =e^{-\sqrt\mu h} u(t)  + e^{-\sqrt\mu (t+h)}\int_{t}^{t+h}\sqrt\mu e^{\sqrt\mu \tau} y(\tau) d \tau.$\\
Let us rewrite this relation in a barycentric form, which is convenient to use a convexity argument:
\begin{eqnarray}
u(t+h) &=&e^{-\sqrt\mu h} u(t)  + e^{-\sqrt\mu (t+h)}\frac{e^{\sqrt\mu (t+h)} -e^{\sqrt\mu t}}{e^{\sqrt\mu (t+h)} -e^{\sqrt\mu t}}\int_{t}^{t+h}\sqrt\mu e^{\sqrt\mu \tau}  y(\tau) d \tau \nonumber \\
 &=& e^{-\sqrt\mu h} u(t)  + (1- e^{-\sqrt\mu h}) \frac{1}{e^{\sqrt\mu (t+h)} -e^{\sqrt\mu t}}\int_{t}^{t+h}\sqrt\mu e^{\sqrt\mu \tau}  y(\tau) d \tau .
\end{eqnarray}
According to the convexity of the   function $f - \displaystyle{ \inf_{\cH}f}$, we obtain
\begin{equation}\label{control_1_sc}
\begin{array}{rcl}
f(u(t+h)) - \inf_{\cH}f & \leq &  e^{-\sqrt\mu h}(f(u(t)) - \inf_{\cH}f ) \\
&+& (1- e^{-\sqrt\mu h})
(f - \inf_{\cH}f) \left(\frac{1}{e^{\sqrt\mu (t+h)} -e^{\sqrt\mu t}}\int_{t}^{t+h}\sqrt\mu e^{\sqrt\mu \tau}  y(\tau) d \tau \right).
\end{array}
\end{equation}
Let us apply the Jensen inequality to majorize the last above expression. Set $s=e^{\sqrt\mu \tau}$, then 
$$
\int_{t}^{t+h}\sqrt\mu e^{\sqrt\mu \tau}  y(\tau) d \tau  = \int_{e^{\sqrt\mu t}}^{e^{\sqrt\mu (t+h)}}  y\left(\frac1{\sqrt\mu }\ln s\right) ds
$$
Let's apply Jensen's inequality. According to the convexity of  $\Psi:=f - \inf_{\cH}f$, we get
\begin{eqnarray*}
\Psi\left(\frac{1}{e^{\sqrt\mu (t+h)} -e^{\sqrt\mu t}}\int_{t}^{t+h}\sqrt\mu e^{\sqrt\mu \tau}  y(\tau) d \tau \right) &=& \Psi\left(\frac{1}{e^{\sqrt\mu (t+h)} -e^{\sqrt\mu t}}\int_{e^{\sqrt\mu t}}^{e^{\sqrt\mu (t+h)}}  y\left(\frac1{\sqrt\mu }\ln s\right) ds\right)\\
 	&\leq & \frac{1}{e^{\sqrt\mu (t+h)} -e^{\sqrt\mu t}}\int_{e^{\sqrt\mu t}}^{e^{\sqrt\mu (t+h)}}  \Psi\left(\ln\left(\frac1{\sqrt\mu }\ln s\right) \right)ds\\
	&=& \frac{1}{e^{\sqrt\mu (t+h)} -e^{\sqrt\mu t}} \int_{t}^{t+h}\sqrt\mu e^{\sqrt\mu \tau} \Psi (y(\tau)) d \tau .
	\end{eqnarray*}
Then, inequality \eqref{control_1_sc} becomes
\begin{equation}\label{control_1_b_sc}
f(u(t+h)) - \inf_{\cH}f \leq  e^{-\sqrt\mu h}(f(u(t)) - \inf_{\cH}f ) + 
\frac{(1- e^{-\sqrt\mu h})\sqrt\mu }{e^{\sqrt\mu (t+h)} -e^{\sqrt\mu t}}\int_{t}^{t+h}e^{\sqrt\mu \tau} (f(y(\tau)) -  \inf_{\cH}f) d \tau .
\end{equation}
According to the convergence rate of the values (\ref{Jensen1_sc}) which was obtained for $y$, we get
\begin{equation}\label{control_1_c_sc}
f(u(t+h)) - \inf_{\cH}f \leq  e^{-\sqrt\mu h}(f(u(t)) - \inf_{\cH}f ) + 
\frac{(1- e^{-\sqrt\mu h})\sqrt\mu }{e^{\sqrt\mu (t+h)} -e^{\sqrt\mu t}}\int_{t}^{t+h}e^{\sqrt\mu \tau} C e^{-\sqrt{\mu}\tau} d \tau .
\end{equation}
Set 
$
F(t) := f(u(t)) - \inf_{\cH}f .
$
We can rewrite equivalently (\ref{control_1_c_sc}) as
$$
F(t+h) - F(t) + (1- e^{-\sqrt\mu h})F(t) \leq Ch \sqrt\mu e^{-\sqrt\mu t}\frac{1- e^{-\sqrt\mu h}}{e^{h}-1}.
$$
Note that $F$ is a $\mathcal C^1$ function, as a composition of such functions.
Therefore, dividing by $h>0$, and letting $h \to 0^+$ in the above inequality, we get by elementary calculation
\begin{equation}\label{control_1_e_sc}
F^\prime(t) + \sqrt{\mu} F(t) \leq C \sqrt{\mu}e^{-\sqrt{\mu}t}.
\end{equation}
Equivalently,
$
\frac{d}{dt}\left( e^{\sqrt\mu t}F(t) -C \sqrt{\mu}t\right)=  e^{\sqrt\mu t} (F^\prime(t) + F(t)) -C \sqrt{\mu}\leq 0.
$
Therefore, the function $t\mapsto e^{\sqrt\mu t}F(t) -C \sqrt{\mu}t$ is nonincreasing, which gives
$$
e^{\sqrt\mu t}F(t) -C \sqrt{\mu}t\leq C_0:=e^{\sqrt\mu t_0}F(t_0) -C \sqrt{\mu}t_0.
$$
We conclude that
$$
f(u(t)) - \inf_{\cH}f  \leq \left(C\sqrt{\mu}t +  C_0\right)e^{-\sqrt{\mu}t} .
$$
(d)   Relations \eqref{sc1}, \eqref{sc0d}
 follow immediately from \eqref{sc0a}, \eqref{sc0c}  and strong convexity of $f$.
\end{proof}
%\begin{remark}{\rm 
%The relation \eqref{sc1} is a direct consequence of  \eqref{sc0a},  and of the strong convexity of $f$.}
%\end{remark}
%
\if
{
\begin{remark}{\rm Let's justify the choice of $\gamma = 2\sqrt{\mu}$ in Theorem \ref{strong-conv-thm}.
Indeed,  considering 
$$
\ddot{y}(t) + 2\gamma \dot{y}(t)   
+ \nabla f (y(t)) = 0,
$$
a similar demonstration to that described above can be made on the basis of the Lyapunov function
$$
\mathcal E (t):= f(y(t))-  \inf_{\mathcal H}f  + \frac{1}{2} \| \gamma (y(t) -x^*) + \dot{y}(t) \|^2.
$$
Under the conditions 
$
\gamma \leq \sqrt{\mu} 
$
we obtain the exponential convergence rate
$$
 f(y(t))-  \inf_{\mathcal H}f =\displaystyle{\mathcal O \left( e^{-\gamma t} \right) }\, \mbox{  as } \, t  \,\to + \infty.
$$ 
Taking $\gamma = \sqrt{\mu}$ gives the best convergence rate, and  the result of Theorem \ref{strong-conv-thm}.} 
\end{remark}
}
\fi

\subsection{Numerical illustration}

\noindent The following numerical example illustrates   the linear convergence rate  of  the  third-order evolution equation \eqref{basic_cc} in the case of the strongly convex function $ f(x_1,x_2)=\frac{1}{2}\left(x_1^2+x_2^2\right)$.  
As we can see in the first table of Figure \ref{figure2}, in the case of strongly convex functions, the new system \eqref{basic_cc} (in green color) is more efficient than the two systems (TOGES-V) and (TOGES-VH). However, (TOGES-V) and (TOGES-VH) are quite fast and have the advantage that their rate of convergence is valid for any convex function. 
Linear convergence for the system \eqref{basic_cc}  is illustrated in the second table of Figure \ref{figure2}, where we obtain the estimate  $f(u(t))-\min f  \leq 10^{-20}\exp (-t^{0.47})$  for $100\leq t_0\leq t\leq 10^4$.
The two  systems (TOGES-V) and (TOGES-VH) give the following speed of convergence of values
$f(u(t))-\min f  \leq 10^{4}t^{-6}$.
Finally, note that \eqref{basic_cc} behaves very similar to Polyak's accelerated dynamics for strongly convex functions (in pink color).
\begin{center}
\begin{figure}[h]
\begin{center}
  \includegraphics[width=15.4cm, height=14cm]
  {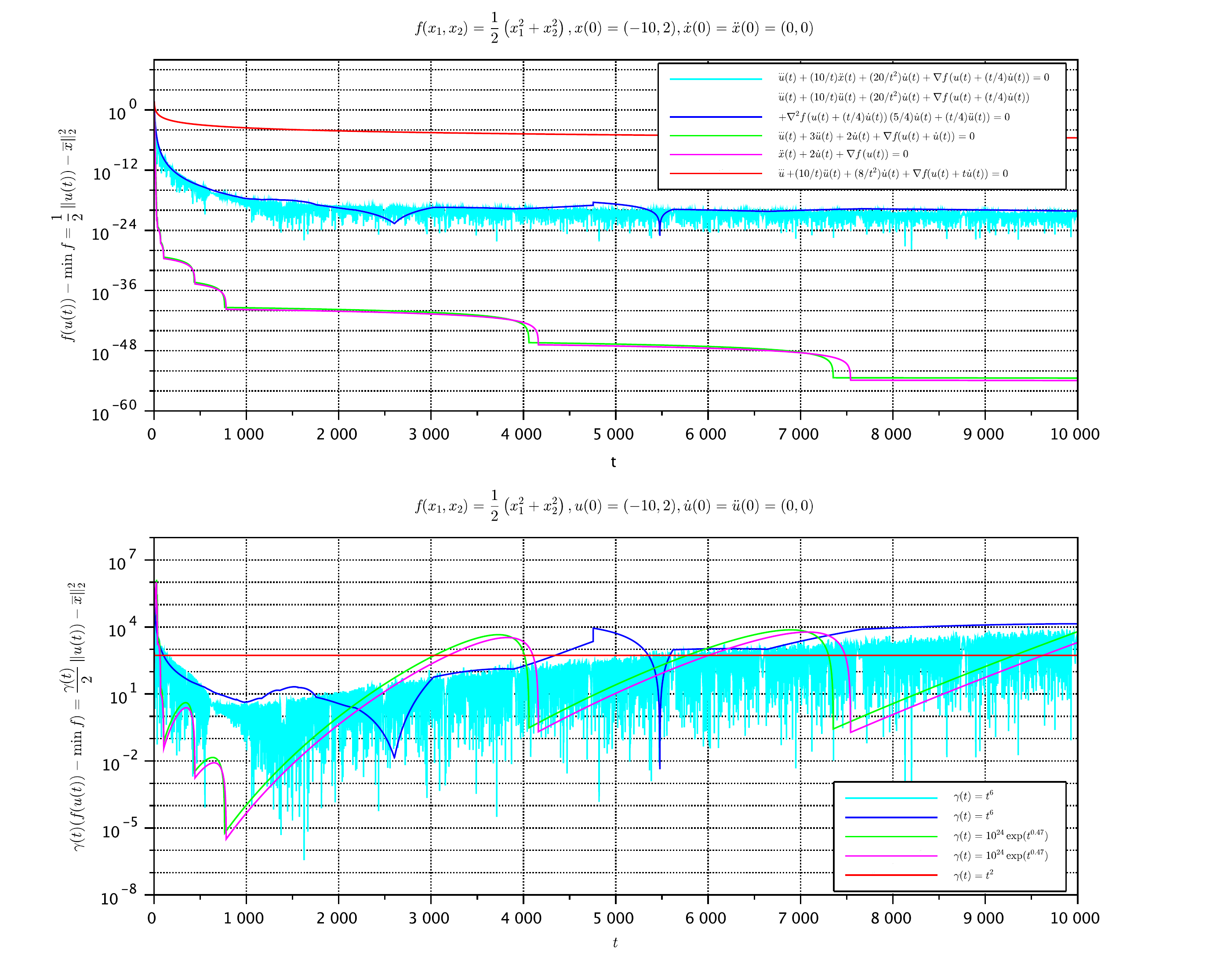}
  \caption{{\small
  Speed of convergence of the values and trajectories associated with different  systems  for the strongly convex function $ f(x_1,x_2)=\frac{1}{2}\left(x_1^2+x_2^2\right)$.}}\label{figure2}
\end{center}
\end{figure}
\end{center}

\section{The nonsmooth case}

In this section, we  assume that $f: \cH \to \rinf$ is a convex lower semicontinuous and proper function such that $\inf_{\cH} f >-\infty$. 
To reduce to the previous situation, where  
$f: \cH \to \R$ is  a $\cC^1$  function whose gradient is Lipschitz continuous, the idea is to replace in (TOGES-V)
$f$ by  its Moreau envelope
 $f_{\lambda}: \cH \to \mathbb R $. Recall that  $f_{\lambda}$ is defined by: for all $x\in \cH$,
$$
f_{\lambda} (x) = \min_{\xi \in \cH} \left\lbrace f (\xi) + \frac{1}{2 \lambda} \| x - \xi \|^2   \right\rbrace.
$$
The function  $f_{\lambda} $ is  convex, of class $ {\mathcal C}^{1,1}$, \, and satisfies  $\inf_{\cH} f_{\lambda} = \inf_{\cH} f $, $\argmin_{\cH} f_{\lambda} = \argmin_{\cH} f$.
We have
\begin{equation}\label{prox_def}
f_{\lambda} (x) = f (\mbox{prox}_{\lambda  f}x) + \frac{1}{2 \lambda} \| x - \mbox{prox}_{\lambda f}x \|^2,
\end{equation}
where $\mbox{prox}_{\lambda  f}$ is the proximal mapping of index $\lambda >0$ of $f$. Equivalently $\mbox{prox}_{\lambda  f} =(I +\lambda \partial f)^{-1}$ is the resolvent of index  $\lambda >0$  of the subdifferential of $f$ (a maximally monotone operator).
One can consult \cite[section 17.2.1]{ABM}, \cite{BaCo}, \cite{Brezis} for an in-depth study of the properties of the Moreau envelope in a Hilbert framework.
 So, replacing $f$ by $f_{\lambda} $ in (TOGES-V), and taking advantage of the fact that $f_{\lambda} $ is continuously differentiable, we get
  \begin{equation*}
\mbox{{\rm(TOGES-VR)} }\qquad \quad \dddot u(t) +\frac{\alpha +7}t\ddot u(t)+  \frac{5(\alpha +1)}{t^2} \dot u(t)+ \nabla f_{\lambda}\left(u(t)+\frac14t\dot u(t)\right)=0,\; t\in [t_0,+\infty[,
\end{equation*}
where the suffix  R stands for Regularized.
Note that $\lambda >0$ is a fixed positive parameter, which can be chosen by the user at his convenience. Indeed, we do not intend to make $\lambda$ tend to zero, as this would lead to a ill-posed limit equation.
 Based on the properties of the Moreau envelope, a direct adaptation of Theorem \ref{thm_principal_b} gives the following convergence results for the regularized dynamic (TOGES-VR).
 
 \begin{Theorem}\label{thm_principal_regularized}
Let $f: \cH \to \rinf$ be  a convex lower semicontinuous and proper function such that $\argmin_{\cH} f \neq \emptyset.$ Let us fix $\lambda >0$.
Let $u: [t_0, +\infty[ \to \cH$ be a solution trajectory of the evolution system {\rm (TOGES-VR)}.

\medskip

a) Suppose that $\alpha \geq 3$. Then, as $t \to + \infty$
\begin{itemize}
\item[$(i)$] \, $f_{\lambda}\Big(u(t)  + \frac14t\dot u(t)\Big) - \inf_{\cH} f = \displaystyle{\mathcal O \left( \frac{ 1}{t^3} \right)}$; \quad
$f\Big( \mbox{\rm prox}_{\lambda  f}\Big(u(t)  + \frac14t\dot u(t)\Big)\Big) - \inf_{\cH} f = \displaystyle{\mathcal O \left( \frac{ 1}{t^3} \right)}$.

\item[$(ii)$] \,  $f_{\lambda}(u(t)) - \inf_{\cH} f =  \displaystyle{ \mathcal O\left( \frac{ 1}{t^3} \right)}$ ; \quad 
$f\Big( \mbox{\rm prox}_{\lambda  f}u(t)\Big) - \inf_{\cH} f = \displaystyle{\mathcal O \left( \frac{ 1}{t^3} \right)}$.
\end{itemize}

b) Suppose that $\alpha > 3$. Then, as $t \to + \infty$

\begin{itemize}
\item[$(iii)$] \,  $f_{\lambda}\Big(u(t)  + \frac14t\dot u(t)\Big) - \inf_{\cH} f = \displaystyle{o \left( \frac{ 1}{t^3} \right)}$; \quad 
$f\Big( \mbox{\rm prox}_{\lambda  f}\Big(u(t)  + \frac14t\dot u(t)\Big)\Big) - \inf_{\cH} f = \displaystyle{o \left( \frac{ 1}{t^3} \right)}$.

\item[$(iv)$] \,  $f_{\lambda}(u(t)) - \inf_{\cH} f =  \displaystyle{ o\left( \frac{ 1}{t^3} \right)}$ ; \quad 
$f\Big( \mbox{\rm prox}_{\lambda  f}u(t)\Big) - \inf_{\cH} f = \displaystyle{o \left( \frac{ 1}{t^3} \right)}$.
\item[$(v)$] \, the trajectory  converges weakly as $t \to + \infty$, let $u(t) \rightharpoonup u_{\infty}$, and $u_{\infty} \in \argmin_{\cH} f$.
\end{itemize}
\end{Theorem}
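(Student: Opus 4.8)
The plan is to notice that (TOGES-VR) is, by construction, nothing but the system (TOGES-V) written for the function $f_{\lambda}$ in place of $f$, and then to combine Theorem \ref{thm_principal_b} (applied to $f_{\lambda}$) with the one-line pointwise inequality between the Moreau envelope and the value of $f$ at the proximal point. So the proof is essentially a transcription argument plus a convexity estimate; I do not expect a genuine obstacle.

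First I would record the structural facts about the Moreau envelope that licence the substitution: $f_{\lambda}:\cH\to\R$ is convex and of class $\cC^{1,1}$ (in particular $\cC^{1}$), and $\argmin_{\cH}f_{\lambda}=\argmin_{\cH}f\neq\emptyset$ with $\inf_{\cH}f_{\lambda}=\inf_{\cH}f$. Consequently a global classical solution $u$ of (TOGES-VR) exists by the non-autonomous Cauchy--Lipschitz theorem exactly as for (TOGES-V), and the entire chain behind Theorem \ref{thm_principal_b} — the change of variable $v=u+\tfrac14 t\dot u$, the auxiliary second-order dynamic \eqref{change var77_b}, and the rates \eqref{change var8} — uses $f$ only through its convexity and $\cC^{1}$ regularity, both of which $f_{\lambda}$ enjoys. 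Hence Theorem \ref{thm_principal_b} applied with $f$ replaced by $f_{\lambda}$ gives, for $\alpha\geq 3$, that $f_{\lambda}\big(u(t)+\tfrac14 t\dot u(t)\big)-\inf_{\cH}f=\cO(1/t^{3})$ and $f_{\lambda}(u(t))-\inf_{\cH}f=\cO(1/t^{3})$, the corresponding $o(1/t^{3})$ statements for $\alpha>3$, and the weak convergence $u(t)\rightharpoonup u_{\infty}$; since $\argmin_{\cH}f_{\lambda}=\argmin_{\cH}f$, the limit satisfies $u_{\infty}\in\argmin_{\cH}f$. This settles the $f_{\lambda}$-parts of $(i)$--$(iv)$ and all of $(v)$.

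It then remains to transfer these bounds to the proximal point. From the identity \eqref{prox_def},
\[
f\big(\prox_{\lambda f}x\big)\;\leq\; f\big(\prox_{\lambda f}x\big)+\frac{1}{2\lambda}\big\|x-\prox_{\lambda f}x\big\|^{2}\;=\;f_{\lambda}(x)\qquad\text{for every }x\in\cH,
\]
so, subtracting $\inf_{\cH}f=\inf_{\cH}f_{\lambda}$, one gets $0\leq f\big(\prox_{\lambda f}x\big)-\inf_{\cH}f\leq f_{\lambda}(x)-\inf_{\cH}f$. Specializing $x=u(t)+\tfrac14 t\dot u(t)$ and $x=u(t)$ and inserting the rates just obtained for $f_{\lambda}$ immediately yields the $\prox$-statements of $(i)$--$(iv)$ with the same $\cO(1/t^{3})$, respectively $o(1/t^{3})$, bounds. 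The only points deserving a word of care are that nothing is lost in passing from $f$ to $f_{\lambda}$ in the derivation of Theorem \ref{thm_principal_b} (because that derivation never used anything beyond convexity and $\cC^{1}$ smoothness), and that the a priori weak limit in $\argmin_{\cH}f_{\lambda}$ is automatically a minimizer of $f$ since the two solution sets coincide.
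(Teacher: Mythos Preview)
Your argument is correct and follows exactly the paper's own approach: apply Theorem~\ref{thm_principal_b} with $f$ replaced by $f_{\lambda}$ (using that $f_{\lambda}$ is convex, $\cC^{1}$, with $\inf_{\cH}f_{\lambda}=\inf_{\cH}f$ and $\argmin_{\cH}f_{\lambda}=\argmin_{\cH}f$), and then invoke the pointwise inequality $f(\prox_{\lambda f}x)\leq f_{\lambda}(x)$ from \eqref{prox_def} to pass from the $f_{\lambda}$-estimates to the $f\circ\prox_{\lambda f}$-estimates. Your write-up simply makes explicit the verifications that the paper leaves implicit.
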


\begin{proof}
The proof consists in   replacing  $f$ by $f_{\lambda}$ in Theorem \ref{thm_principal_b}, and in using the property
\begin{equation}\label{prox_def_b}
f_{\lambda} (x) \geq f (\mbox{prox}_{\lambda  f}x), 
\end{equation}
which is a direct consequence of the definition \eqref{prox_def} of the proximal mapping.
\end{proof}

\begin{remark}
The temporal discretization of (TOGES-VR), explicit or implicit, naturally leads to relaxed proximal methods, see in the case of second order dynamics \cite{AC_Opt} and \cite{AP-Moreau}.
\end{remark}

\section{Conclusion, perspectives}

From the point of view of the design of first-order rapid optimization algorithms for convex optimization, the new system (TOGES-V) offers 
interesting perspectives.
In its continuous form, this system offers  the convergence rate$f(u(t)) - \inf_{\cH} f =   o\left( \frac{ 1}{t^3} \right)$  as $t \rightarrow +\infty$,
as well as  the convergence of trajectories.
This notably improves the convergence rate $\mathcal O\left( \frac{ 1}{t^2} \right)$ which is attached to the continuous version by Su-Boyd-Cand\`es of the accelerated gradient method of Nesterov.
Since the coefficient of the gradient is fixed in the dynamic, we can expect that the explicit temporal discretization (gradient methods), as well as the implicit temporal discretization (proximal methods), always benefit from similar convergence rates.
This is a topic for further research.
The system (TOGES-V) is flexible, and can be combined with the Hessian damping, which results in a significant reduction in oscillations.
Finally, note that the system (TOGES-V) can be extended to the case of convex lower semicontinuous with extended real values. In this case, the corresponding algorithms are relaxed proximal algorithms, another promising topic.

%Concerning the convergence rates of the values, we have been able to pass from the estimates on $v$ to the estimates on $u$ thanks to the Jensen's inequality.
%It is an open question to know if we can use a similar device to pass from the estimate
%$\int_{t_1 }^{\infty} t^4  \|\nabla f(u(t) + t \dot{u}(t)) \|^2  dt \leq  \frac{(\alpha -2)E(t_1)}{\beta}.
%$
%to a similar estimates involving $\nabla f(u(t))$ instead of $\nabla f(u(t) + t \dot{u}(t))$.

\end{document}